\documentclass[11pt, reqno]{amsart}

\usepackage{amsthm,amssymb,amstext,amscd,amsfonts,amsbsy,amsrefs,amsxtra,latexsym,amsmath,xcolor,mathrsfs,fancybox,upgreek, soul,url}
\usepackage[english]{babel}
\usepackage[all,cmtip]{xy}
\usepackage{hyperref}
\hypersetup{colorlinks=true,linkcolor=blue,citecolor=magenta}
\usepackage[latin1]{inputenc}
\usepackage{cancel}
\usepackage{comment}
\usepackage{mdframed}
\allowdisplaybreaks

\usepackage{mathtools}
\usepackage{enumerate}
\usepackage{thmtools}
\usepackage{thm-restate}
\usepackage{chngcntr}
\usepackage{enumitem}
\usepackage{etoolbox}
\usepackage{todonotes}

\usepackage{tikz}
\usepackage{verbatim}
\usetikzlibrary{quotes,angles}

\DeclarePairedDelimiter\abs{\lvert}{\rvert}
\DeclarePairedDelimiter\norm{\lVert}{\rVert}

\makeatletter
\let\oldabs\abs
\def\abs{\@ifstar{\oldabs}{\oldabs*}}
\let\oldnorm\norm
\def\norm{\@ifstar{\oldnorm}{\oldnorm*}}
\makeatother

\makeatletter
\glb@settings 
\fontdimen16\textfont2=.115cm	
\fontdimen17\textfont2=5pt
\fontdimen14\textfont2=5pt
\fontdimen13\textfont2=5pt		
\makeatother

\oddsidemargin = 0cm 
\evensidemargin = 0cm 
\textwidth = 6.5in

\newtheorem{theorem}{Theorem}

\newtheorem{proposition}[theorem]{Proposition}

\theoremstyle{definition}

\theoremstyle{remark}

\newtheorem*{remark}{Remark}

\numberwithin{theorem}{section}
\numberwithin{proposition}{section}
\numberwithin{lemma}{section}
\numberwithin{corollary}{section}
\numberwithin{equation}{section}
\numberwithin{conjecture}{section}

\setlist[enumerate,1]{before=}
\AfterEndEnvironment{enumerate}{}

\newcommand{\N}{\mathbb{N}}

\usepackage{bm}

\def\lp{\left(}
\def\rp{\right)}

\author{Kevin Gomez}
\address{Department of Mathematics, 1420 Stevenson Center, Vanderbilt University, Nashville, TN 37240}
\email{kevin.j.gomez@vanderbilt.edu}
\author{Joshua Males}
\address{450 Machray Hall, Department of Mathematics, University of Manitoba, Winnipeg,
	Canada}
\email{joshua.males@umanitoba.ca}
\author{Larry Rolen}
\address{Department of Mathematics, 1420 Stevenson Center, Vanderbilt University, Nashville, TN 37240}
\email{larry.rolen@vanderbilt.edu}

\begin{document}

\title[The second shifted difference of partitions and its applications]{The second shifted difference of partitions and its applications}

\subjclass[2020]{11P82}

\keywords{Partitions, convexity, asymptotic expansions}

\thanks{The research of the second author conducted for this paper is supported by the Pacific Institute for the Mathematical Sciences (PIMS). The research and findings may not reflect those of the Institute. This work was supported by a grant from the Simons Foundation (853830, LR). The third author is also grateful for support from a 2021-2023 Dean's Faculty Fellowship from Vanderbilt University and to the Max Planck Institute for Mathematics in Bonn for its hospitality and financial support.}

\begin{abstract}
A number of recent papers have estimated ratios of the partition function $p(n-j)/p(n)$, which appears in many applications. Here, we prove an  easy-to-use effective bound on these ratios. Using this, we then study second shifted difference of partitions, $f(j,n) \coloneqq p(n) -2p(n-j) +p(n-2j)$, and give another easy-to-use estimate of $f(j,n)$. As applications of these, we prove a shifted convexity property of $p(n)$, as well as giving new estimates of the $k$-rank partition function $N_k(m,n)$ and non-$k$-ary partitions along with their differences.
\end{abstract}

\maketitle

\section{Introduction and statement of results}
The study of the values of the partition function $p(n)$, which counts the number of partitions of a positive integer $n$, has a long history. Recall that a partition $\lambda$ of $n$ is a non-increasing list $(\lambda_1, \lambda_2, \dots,\lambda_s)$ such that $\sum_{j=1}^{s} \lambda_j = n$. In their famed collaboration a century ago, Hardy and Ramanujan \cite{HardyRamanujan} proved that as $n\to \infty$ we have the asymptotic formula
\begin{align}\label{p asymp}
p(n) \sim	\frac{1}{4\sqrt{3}n} e^{\pi \sqrt{\frac{2n}{3}}}.
\end{align}
Their proof gave birth to the Circle Method, which is a highly important tool used throughout analytic number theory today. 
Following their discovery, Rademacher \cite{RademacherExact} improved Hardy and Ramanujan's application of the Circle Method to prove an exact formula for $p(n)$. Over the past 100 years, there have been a plethora of investigations into estimates and asymptotics for partitions and their extensions in the literature. 

Here, we will study differences of partition values in detail.
To this end, let $\Delta$ be the backward difference operator defined on sequences $f(n)$ by
\begin{align*}
	\Delta(f(n)) \coloneqq f(n) - f(n-1),
\end{align*}
and its recursive counterpart
\begin{align*}
	\Delta^k (f(n)) \coloneqq \Delta \left( \Delta^{k-1} (f(n)) \right).
\end{align*}
One of the simplest properties of $p(n)$ to prove is that it is convex for $n \geq 2$ (see e.g. \cite{Gupta}), i.e.
\begin{align*}
	p(n) + p(n-2) \geq 2p(n-1).
\end{align*}
Recast using the operator $\Delta$ this is the same as proving that
\begin{align*}
	\Delta^2(p(n)) \geq 0
\end{align*}
for all $n \geq 2$. Gupta \cite{Gupta} also investigated higher powers of $\Delta$ applied to $p$, proving that there exist constants $n_r$ for all $r >0$ such that  $\Delta^r(p(n)) \geq 0$ for all $n \geq n_r$. Moreover, Odlyzko \cite{Od} considered a further conjecture of Gupta, proving that for each $r$ there is a fixed $n_0(r)$ such that for all $n < n_0(r)$ we have that $(-1)^n\Delta^r(p(n)) > 0$ and for all $n \geq n_0(r)$ we have $\Delta^r(p(n)) \geq 0$, as well as giving a beautiful philosophical discussion of why this phenomenon arises. Similar differences of objects related to $p(n)$ and its extensions have been studied by many other authors in the literature, see e.g. \cite{Canfield,Chen,Knessl2,Knessl1,Merca} among many others.

We initiate the investigation of what we call $j$-shifted differences, defined for $1\leq j < n$ on sequences $f(n)$ by
\begin{align*}
	\Delta_j (f(n)) \coloneqq f(n) - f(n-j).
\end{align*}
In analogy to Gupta, it is clear using \eqref{p asymp} that there exist constants $n_{j}$ such that for all $n \geq n_j$ one has that $\Delta_j^2(p(n))\geq 0$. Let $N \coloneqq n-\frac{1}{24}$. Our methods rely on a careful study of the value of the function 
\begin{align*}
	f(j,n) \coloneqq \Delta_j^2(p(n)) = p(n) - 2p(n-j) + p(n-2j),
\end{align*}
and in Theorem \ref{thm:fjn} we prove a precise estimation of $f(j,n)$ with $j \leq \frac{\sqrt{N}}{4}$, in particular providing a strict error term allowing us to closely control the precision of the formula by taking $N$ large enough. In doing so, we provide an easy-to-use estimate for the ratio of partition numbers. Throughout, we use the notation $f(x)= O_\leq(g(x))$ to mean that $\lvert f(x) \rvert \leq g(x)$ for $x$ in the appropriate domain.
\begin{theorem}\label{Thm: main p(n-j)/p(n)}
	Let $n \geq 14$ and $j < \frac{\sqrt{N}}{2}$. Then
	\begin{align*}
		\frac{p(n-j)}{p(n)} = e^{-\frac{\pi j}{\sqrt{6N}}} \left(1+\frac jN-\frac{\pi j^2}{4\sqrt{6}N}-\frac{\sqrt{3}}{\sqrt{2\pi}\sqrt{N}} + O_\leq\left(  \frac{2.71}{N}\right) \right)  \left( 1+ \frac{\sqrt{3}}{\sqrt{2N}\pi} + O_\leq \left( \frac{1350}{N} \right) \right).
	\end{align*}
\end{theorem}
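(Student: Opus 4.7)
The plan is to apply an effective form of the Hardy--Ramanujan/Rademacher formula for $p(n)$ with a fully quantified remainder, and then to expand the ratio $p(n-j)/p(n)$ as a product of simple pieces, Taylor-expanding each one. Writing $\mu = \mu(N) := \pi\sqrt{2N/3}$, I would start from the Rademacher series truncated after the $k=1$ term and use an explicit tail bound (an effective Lehmer-type inequality), obtaining
\[
    p(n) = \frac{\sqrt{12}}{24\,N}\left(1 - \frac{1}{\mu}\right)e^{\mu}\bigl(1 + E(N)\bigr),
\]
with $|E(N)|$ explicitly controlled once $n \geq 14$; the Rademacher tail decays essentially like $e^{-\mu/2}$ and can be absorbed into the stated error.

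Next I would factor
\[
    \frac{p(n-j)}{p(n)} = \frac{N}{N-j}\cdot e^{\mu(N-j) - \mu(N)} \cdot \frac{1 - 1/\mu(N-j)}{1 - 1/\mu(N)} \cdot \frac{1 + E(N-j)}{1 + E(N)}
\]
and Taylor-expand each factor in turn. The polynomial prefactor yields $1 + j/N + O(j^2/N^2)$. For the exponential ratio, I would expand $\mu(N-j) = \mu(N)\sqrt{1 - j/N}$ using $\sqrt{1-x} = 1 - x/2 - x^2/8 - \cdots$, obtaining
\[
    \mu(N-j) - \mu(N) = -\frac{\pi j}{\sqrt{6N}} - \frac{\pi j^2}{4\sqrt{6}\,N^{3/2}} + O\!\left(\frac{j^3}{N^{5/2}}\right);
\]
pulling out the factor $e^{-\pi j/\sqrt{6N}}$ (matching the exponential appearing in the theorem) and Taylor-expanding the small exponential remainder produces the $j^2$ contribution and, combined with the prefactor, the first parenthetical factor of the claim. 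The correction ratio $(1-1/\mu(N-j))/(1-1/\mu(N))$ expands as $1 + 1/\mu(N) + O(1/\mu^2)$, giving the term $\sqrt{3}/(\pi\sqrt{2N})$ in the second parenthetical factor, into which the Rademacher tail error $(1+E(N-j))/(1+E(N))$ is also absorbed.

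The main obstacle, and the bulk of the work, is the explicit bookkeeping needed to convert every asymptotic $O$-term into the sharp $O_\leq$ constants $2.71$ and $1350$ claimed in the statement. At each step one has to replace Taylor remainders with explicit Lagrange or integral bounds, and use the hypothesis $j < \sqrt{N}/2$ to control powers of $j/\sqrt{N}$ (so that $j^2/N \leq 1/4$, $j^3/N^{3/2} \leq 1/8$, and so on). Because $n \geq 14$, every intermediate quantity such as $1/\mu(N-j)$ is bounded away from $1$, and the tail estimate is genuinely small. Once every intermediate constant has been tracked, grouping the resulting series into the two parenthetical factors -- one carrying the $j$-dependent Taylor expansion, the other the $1/\mu$ correction together with the Rademacher tail -- yields the claimed identity.
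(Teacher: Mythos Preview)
Your overall strategy---effective Rademacher truncation plus term-by-term Taylor expansion of the ratio---is exactly what the paper does, and the bookkeeping you describe for the polynomial prefactor and the exponential is correct. There is, however, one genuine slip and one packaging mismatch.

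The slip is in your expansion of the ratio $(1-1/\mu(N-j))/(1-1/\mu(N))$. Writing $a=1/\mu(N-j)$ and $b=1/\mu(N)$, this equals $1+(b-a)/(1-b)$, and since $a-b=O(j/N^{3/2})$ the whole ratio is $1+O(j/N^{3/2})$, \emph{not} $1+1/\mu(N)+O(1/\mu^2)$ as you claim. The two $1/\mu$ contributions from numerator and denominator essentially cancel here, so your factorization does not produce the $\sqrt{3}/(\pi\sqrt{2N})$ term in the second parenthetical at all; nor does it produce the $-\sqrt{3}/(\sqrt{2}\pi\sqrt{N})$ term in the first.

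This is where your grouping diverges from the paper's. The paper does \emph{not} form the ratio of the $(1-1/\mu)$ factors; instead it estimates $1/p(n)$ as a whole, obtaining $4\sqrt{3}\,N\,e^{-\mu(N)}\bigl(1+\sqrt{3}/(\sqrt{2}\pi\sqrt{N})+O_\le(1350/N)\bigr)$ (this is the second parenthetical), and separately estimates $4\sqrt{3}\,N\,e^{-\mu(N)+\pi j/\sqrt{6N}}\,p(n-j)$, obtaining the first parenthetical with its own $-\sqrt{3}/(\sqrt{2}\pi\sqrt{N})$ term and the $O_\le(2.71/N)$ error. Keeping these two pieces separate is precisely what generates the stated two-factor form. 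Your cancellation would give an equivalent (and arguably cleaner) single-factor estimate, but not the theorem as written; to recover the exact statement you must refrain from combining the $(1-1/\mu)$ pieces and instead expand $1/(1-1/\mu(N))$ on its own.
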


Theorems of a similar flavour to Theorem \ref{Thm: main p(n-j)/p(n)} are abundant in the literature. Lehmer \cite{L1,L2} used Rademacher's exact formula for $p(n)$ \cite{RademacherExact} to provide bounds on the value of $p(n)$ that have seen many applications. More recently, estimates for the ratio of partition values have played a prominent role in proving that the associated Jensen polynomial is eventually hyperbolic \cite{GORZ,LW}, a problem intricately linked with variants of the Riemann hypothesis.

 Theorem~\ref{Thm: main p(n-j)/p(n)} thus applies to many interesting situations. In the remainder of the introduction, we will highlight a few of particular interest.
Our first main result using Theorem \ref{thm:fjn} gives an explicit formula for $n_j$ for ranges of $j$.
\begin{theorem}\label{Thm: main Delta_j>0}
	Let $n \geq 2$ and $j \leq\frac{\sqrt{N}}{4}$. Then we have that 
	\begin{align*}
		\Delta_j^2(p(n)) \geq 0.
	\end{align*}
Equivalently, $p(n)$ satisfies the extended convexity result
	\begin{align*}
		p(n) + p(n-2j) \geq 2p(n-j).
	\end{align*}
\end{theorem}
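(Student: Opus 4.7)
The plan is to reduce the claim to Theorem~\ref{Thm: main p(n-j)/p(n)} by applying it with both shifts $j$ and $2j$: the hypothesis $j \leq \sqrt{N}/4$ is tailored precisely so that $2j < \sqrt{N}/2$ falls within the range of validity of that theorem. Since $p(n) > 0$, the desired inequality $\Delta_j^2(p(n)) \geq 0$ is equivalent to
\[
R(j,n) := 1 - 2\,\frac{p(n-j)}{p(n)} + \frac{p(n-2j)}{p(n)} \geq 0,
\]
and it is this form that I will estimate.

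Setting $\mu := \pi/\sqrt{6N}$ and denoting by $A(j)$ and $B$ the two bracketed factors in Theorem~\ref{Thm: main p(n-j)/p(n)}, we have $p(n-j)/p(n) = e^{\mu j} A(j) B$ and $p(n-2j)/p(n) = e^{2\mu j} A(2j) B$, both equipped with explicit error terms. Substituting into $R(j,n)$ and completing the square on the first two summands gives the identity
\[
R(j,n) = \bigl(1 - e^{\mu j} A(j) B\bigr)^2 + e^{2\mu j} B \bigl(A(2j) - A(j)^2 B\bigr),
\]
which exhibits $R(j,n)$ as a manifest perfect square plus a correction whose sign is controlled by $A(2j) - A(j)^2 B$. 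The heart of the proof is thus to verify that this correction is either nonnegative or negative but strictly smaller in absolute value than the perfect square. A term-by-term expansion of $A(j)$, $A(2j)$, and $B$ in powers of $1/\sqrt{N}$ and $j/\sqrt{N}$ shows that the $j$-linear contributions to $A(2j)$ and $A(j)^2 B$ cancel, leaving a leading behavior of the form $c_1/\sqrt{N} - c_2 j^2/N$ with explicit positive constants $c_1, c_2$. For $j \lesssim N^{1/4}$ this correction is itself nonnegative, while for $j$ close to $\sqrt{N}/4$ the factor $e^{\mu j}$ is bounded away from $1$, so the perfect-square term dominates in magnitude.

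The main obstacle will be propagating the explicit $O_\leq$ constants from Theorem~\ref{Thm: main p(n-j)/p(n)} uniformly across the full range $j \leq \sqrt{N}/4$; in particular, the constant $1350$ in the second bracket is relatively large and forces the asymptotic comparison to become effective only once $n$ exceeds an explicit (possibly sizable) threshold. The remaining finitely many small values of $n$ can be dispatched by direct numerical verification, noting that the hypothesis $j \leq \sqrt{N}/4$ is vacuous for $N < 16$ and that the case $j = 1$ is already covered by the classical convexity of $p(n)$.
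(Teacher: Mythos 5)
Your completing-the-square identity is algebraically correct, and the overall strategy (feed the shifts $j$ and $2j$ into Theorem~\ref{Thm: main p(n-j)/p(n)} and study $1-2r_1+r_2$) is a reasonable reorganization of the same asymptotic data the paper uses. But the heart of your argument --- the sign of the correction $A(2j)-A(j)^2B$ --- is exactly the step you do not carry out, and your sketch of it is off in two ways. First, the $j^2$ term in $A(j)$ is of size $j^2/N^{3/2}$ (the exponent in the displayed theorem is a typo; compare the proof of Theorem~\ref{thm:fjn}), so for $j\le\sqrt{N}/4$ its contribution to the correction is at most about $0.05/\sqrt{N}$, uniformly dominated by the positive constant term $\bigl(\tfrac{\sqrt3}{\sqrt{2\pi}}-\tfrac{\sqrt3}{\sqrt2\pi}\bigr)/\sqrt N\approx 0.30/\sqrt N$; there is no transition at $j\sim N^{1/4}$ and no case split is needed at the level of main terms. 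Second, your fallback for the regime where the correction might be negative --- ``the perfect-square term dominates'' --- fails precisely where you would need it: for small $j$ the square $(1-e^{\mu j}A(j)B)^2$ is only $O(j^2/N)$, while the error in the correction inherited from the $O_\le(1350/N)$ in $B$ is thousands over $N$, so the square cannot absorb it. The only way to close the argument is to show the correction itself is positive, and with the stated constants that holds only once $0.26/\sqrt N$ beats roughly $4000/N$, i.e.\ $N\gtrsim 2\times 10^{8}$. Your ``direct numerical verification of the remaining finitely many small values'' is therefore a computation over hundreds of millions of values of $n$, each with up to $\sqrt{n}/4$ choices of $j$ --- not the routine finite check you suggest, and you give no plan for it.

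For comparison, the paper proves the theorem from the expansion of $f(j,n)/p(n)$ in Theorem~\ref{thm:fjn} by isolating $X=e^{\sqrt2\pi j/\sqrt{3N}}-2e^{\pi j/\sqrt{6N}}$, noting $-1<X<0$ (which is your perfect square in disguise, since $X+1=(e^{\pi j/\sqrt{6N}}-1)^2$) and that $X$ is multiplied by a factor less than $1$; it also records a fully elementary proof via the injection $V_j(n-j)\hookrightarrow V_j(n)$ on non-$j$-ary partitions, which gives $p(n)-p(n-j)\ge p(n-j)-p(n-2j)$ for all $n$ with no asymptotics and no numerical verification at all. If you want to salvage your route, you should prove positivity of $A(2j)-A(j)^2B$ directly (discarding the square, which you then never need), and confront honestly the threshold on $N$ forced by the constant $1350$.
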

\begin{remark}
	The methods here should extend to finding formula for $n_{r,j}$ such that for all $n \geq n_{r,j}$ one has $\Delta_j^r(p(n)) \geq 0$, however this would quickly become very lengthy and so we do not pursue this here. Moreover, the referee has kindly pointed out more elementary methods to proving Theorem \ref{Thm: main Delta_j>0} which we elucidate at the end of the paper.
\end{remark}

Our results also apply outside of proving new properties of the partition function itself. We consider the $k$-rank function $N_k(m,n)$ which counts the number of partitions of $n$ into at least $(k-1)$ successive Durfee squares with $k$-rank equal to $m$ \cite{Garvan}. When $k =1$ we recover the number of partitions of $n$ whose Andrews--Garvan crank equals $m$, and when $k=2$ we recover Dyson's partition rank function. Then for $m> \frac{n}{2}$ we have that (see e.g.\@ \cite[page 6]{LZ})
\begin{align*}
	N_k(m,n) = p(n-k-m+1) - p(n-k-m) ,\qquad N_k(m,n) - N_k(m+1,n) = f(1,n-k-m).
\end{align*}
We give precise formulae both $N_k(m,n)$ and for the differences of $k$-ranks in certain ranges of $m$ in the following theorems, improving on  \cite[Theorem 1.4]{LZ} in this range. The proof follows from a direct application of Theorem \ref{Thm: main p(n-j)/p(n)}.

\begin{theorem}
	Let $m > \frac{n}{2}$ and $\ell \coloneqq n-k-m+\frac{23}{24} >16$. Then we have that
	\begin{align*}
		\frac{N_k(m,n)}{p(n-k-m+1)} = 1-e^{-\frac{\pi}{\sqrt{6\ell}}} \left(1- \frac{\sqrt{3}}{\sqrt{2\pi \ell}} + O_\leq\left(  \frac{4.04}{\ell}\right) \right)  \left( 1+ \frac{\sqrt{3}}{\sqrt{2\ell}\pi} + O_\leq \left( \frac{1350}{\ell} \right) \right).
	\end{align*}
\end{theorem}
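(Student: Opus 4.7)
The plan is to reduce the statement to a direct application of Theorem \ref{Thm: main p(n-j)/p(n)} via the elementary identity for $N_k(m,n)$ quoted just before the theorem. Specifically, for $m > n/2$ one has $N_k(m,n) = p(n-k-m+1) - p(n-k-m)$, so dividing both sides by $p(n-k-m+1)$ gives
\begin{align*}
\frac{N_k(m,n)}{p(n-k-m+1)} = 1 - \frac{p(n-k-m)}{p(n-k-m+1)}.
\end{align*}
Thus everything reduces to estimating the ratio on the right.

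To evaluate the ratio, I would apply Theorem \ref{Thm: main p(n-j)/p(n)} with $j=1$ and with the variable $n$ there replaced by $n' \coloneqq n-k-m+1$. Note that under this substitution the quantity $N = n' - \tfrac{1}{24}$ becomes exactly $\ell = n-k-m+\tfrac{23}{24}$. The hypotheses of Theorem \ref{Thm: main p(n-j)/p(n)} must be checked: the assumption $\ell > 16$ forces $n' \geq 17 > 14$, and $j=1 < \sqrt{\ell}/2$ since $\ell > 16 > 4$. Substituting directly yields
\begin{align*}
\frac{p(n-k-m)}{p(n-k-m+1)} = e^{\frac{\pi}{\sqrt{6\ell}}} \left(1 + \frac{1}{\ell} - \frac{\pi}{4\sqrt{6}\,\ell} - \frac{\sqrt{3}}{\sqrt{2\pi}\sqrt{\ell}} + O_{\leq}\!\left(\frac{2.71}{\ell}\right)\right)\left(1 + \frac{\sqrt{3}}{\sqrt{2\ell}\,\pi} + O_{\leq}\!\left(\frac{1350}{\ell}\right)\right).
\end{align*}

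The remaining step is cosmetic: simplify the first parenthetical factor to match the form stated in the theorem. Rewriting $\frac{\sqrt{3}}{\sqrt{2\pi}\sqrt{\ell}} = \frac{\sqrt{3}}{\sqrt{2\pi \ell}}$, the two leftover lower-order terms $\frac{1}{\ell}$ and $-\frac{\pi}{4\sqrt{6}\,\ell}$ are absorbed into the error by the triangle inequality; they contribute at most $\frac{1}{\ell} + \frac{\pi}{4\sqrt{6}\,\ell}$ in absolute value, and since $\frac{\pi}{4\sqrt{6}} < 0.33$, the total error bound increases from $\frac{2.71}{\ell}$ to at most $\frac{2.71 + 1 + 0.33}{\ell} < \frac{4.04}{\ell}$. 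Plugging the resulting expression back into $1 - \tfrac{p(n-k-m)}{p(n-k-m+1)}$ produces exactly the claimed identity.

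There is no genuine obstacle here; the only thing to watch is the careful bookkeeping of the constants in the error term when absorbing the extra $\frac{1}{\ell}$ and $-\frac{\pi}{4\sqrt{6}\,\ell}$ contributions, and confirming that the bound $4.04$ is achieved. Everything else is mechanical substitution into Theorem \ref{Thm: main p(n-j)/p(n)}.
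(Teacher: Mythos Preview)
Your proposal is correct and follows exactly the route the paper takes: the paper simply states that the result is a direct application of Theorem~\ref{Thm: main p(n-j)/p(n)}, and you have carried out precisely that application with $j=1$ and $n$ replaced by $n-k-m+1$ (so that $N$ becomes $\ell$), together with the hypothesis check and the bookkeeping needed to absorb the $\tfrac{1}{\ell}$ and $-\tfrac{\pi}{4\sqrt{6}\,\ell}$ terms into the $O_{\leq}(4.04/\ell)$ error.
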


We then turn to obtaining a precise estimate for the differences of $k$-ranks, with the proof following from a direct application of Theorem \ref{thm:fjn}.

\begin{theorem}\label{Thm: k-ranks precise}
	Let $m> \frac{n}{2}$ and $\ell  =  n-k-m+\frac{23}{24} > 16$. Then we have that
	\begin{align*}
		\frac{	N_k(m,n) - N_k(m+1,n)}{p(n-k-m+1)} = 1&+ e^{-\frac{\sqrt{2}\pi}{\sqrt{3\ell}}} \left( 1+   \left( \frac{\sqrt{3}}{\sqrt{2}\pi} - \frac{\sqrt{3}}{\sqrt{2 \pi}} \right) \frac{1}{\sqrt{\ell}} + O_\leq \left( \frac{2079}{\ell} \right)\right)\\
		& - e^{-\frac{\pi}{\sqrt{6\ell}}} \left(  2 + \left( \frac{2\sqrt{3}}{\sqrt{2}\pi} - \frac{2\sqrt{3}}{\sqrt{2 \pi}} \right) \frac{1}{\sqrt{\ell}} +  O_\leq \left( \frac{3929}{\ell} \right) \right).
	\end{align*}
\end{theorem}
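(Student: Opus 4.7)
The plan is to express $N_k(m,n)-N_k(m+1,n)$ as the second shifted difference $f(1,\cdot)$ of $p$, and then to invoke Theorem~\ref{Thm: main p(n-j)/p(n)} twice to asymptotically expand the two ratios of partition values that appear after dividing through by $p(n-k-m+1)$.

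Setting $M := n-k-m+1$, so that $\ell = M - \tfrac{1}{24}$ plays the role of the quantity $N$ from Theorem~\ref{Thm: main p(n-j)/p(n)}, the two-term formula quoted in the introduction gives $N_k(m,n)=p(M)-p(M-1)$. Subtracting the corresponding expression for $N_k(m+1,n)$ produces
\[
N_k(m,n)-N_k(m+1,n) = p(M)-2p(M-1)+p(M-2),
\]
so
\[
\frac{N_k(m,n)-N_k(m+1,n)}{p(M)} = 1 - 2\,\frac{p(M-1)}{p(M)} + \frac{p(M-2)}{p(M)}.
\]
I would then apply Theorem~\ref{Thm: main p(n-j)/p(n)} with $N=\ell$ and $j\in\{1,2\}$, noting that $\ell>16$ gives $2 < \sqrt{\ell}/2$, so both invocations are legitimate. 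The exponential prefactors $e^{\pi/\sqrt{6\ell}}$ and $e^{2\pi/\sqrt{6\ell}}= e^{\sqrt{2}\pi/\sqrt{3\ell}}$ produced by the two applications match exactly the two exponentials in the desired statement.

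What remains is to massage the two bracket factors coming from the theorem. For $j \in \{1,2\}$, the ingredients $j/\ell$ and $\pi j^2/(4\sqrt{6}\,\ell)$ are of order $1/\ell$ and so can be absorbed into the error, leaving at main order only the $1/\sqrt{\ell}$ contributions $-\sqrt{3}/\sqrt{2\pi\ell}$ from the first bracket and $+\sqrt{3}/(\sqrt{2\ell}\,\pi)$ from the second. This gives the explicit coefficient $\sqrt{3}/(\sqrt{2}\pi)-\sqrt{3}/\sqrt{2\pi}$ in front of $1/\sqrt{\ell}$ in the $j=2$ contribution and twice this coefficient in the $j=1$ contribution (due to the leading $-2$). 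The main obstacle is purely computational: one must carefully expand the product of the two error-bearing brackets, bound cross terms such as $\tfrac{2.71}{\ell}\cdot\tfrac{\sqrt{3}}{\sqrt{2\ell}\,\pi}$, $\tfrac{1350}{\ell}\cdot\tfrac{\sqrt{3}}{\sqrt{2\pi\ell}}$, and $\tfrac{2.71}{\ell}\cdot\tfrac{1350}{\ell}$ using $1/\sqrt{\ell}\leq 1/4$ (from $\ell>16$), and multiply through by $2$ where the $-2p(M-1)/p(M)$ term demands it, in order to reach the concrete numerical constants $2079$ and $3929$. No analytic difficulties arise beyond this bookkeeping.
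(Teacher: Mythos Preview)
Your approach is correct and is essentially what the paper does, but you take a slightly longer route. The paper proves this theorem as a one-line application of Theorem~\ref{thm:fjn}: since $N_k(m,n)-N_k(m+1,n)=f(1,n-k-m+1)$, one simply specialises Theorem~\ref{thm:fjn} to $j=1$ with $N$ replaced by $\ell$, and then absorbs the leftover $j$-dependent terms $\tfrac{2}{\ell}-\tfrac{\pi}{\sqrt{6}\,\ell^{3/2}}$ and $\tfrac{2}{\ell}-\tfrac{\pi}{2\sqrt{6}\,\ell^{3/2}}$ into the $O_\le$-constants (bumping $2075$ to $2079$ and $3926$ to $3929$ using $\ell>16$). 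You instead go back to Theorem~\ref{Thm: main p(n-j)/p(n)} and multiply out the two bracket factors by hand, which is precisely the computation carried out once and for all inside the proof of Theorem~\ref{thm:fjn}; so you are re-deriving the $j=1$ case of that theorem rather than citing it. Both routes are valid and yield the same constants, but invoking Theorem~\ref{thm:fjn} directly spares you the cross-term bookkeeping you flag as the ``main obstacle.''
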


As a direct implication, we recover positivity of the differences of $k$-ranks in these cases, as in \cite[Corollary 1.5]{LZ}.

Our final application is to so-called non-$k$-ary partitions ($k \in \N$), recently defined by Schneider \cite{Sch} as partitions of $n$ with no parts equal to $k$\footnote{While \cite{Sch} uses the terminology ``$k$-nuclear", Schneider has recommended the authors use the term non-$k$-ary based on advice of G. Andrews to better fit the case of $k=1$, classically called the non-unitary partitions.}. Letting $\nu_k(n)$ be the number of non-$k$-ary partitions of $n$, it is clear that $\nu_k(n) = p(n) -p(n-k)$. By Theorem \ref{Thm: main p(n-j)/p(n)} we immediately obtain an effective estimate for the ratio $\nu_k(n)/p(n)$, improving on \cite[Theorem 1]{akande}. We also have 
\begin{align*}
	\nu_k(n) - \nu_k(n-k) =  f(k,n),
\end{align*}
and so we also obtain precise estimates for differences of non-$k$-ary partitions using Theorem \ref{thm:fjn} for $k < \frac{\sqrt{N}}{4}$, with a direct implication being the following theorem.

\begin{theorem}
	For $n \geq 2$ and $k \leq \frac{\sqrt{N}}{4}$ we have
	\begin{align*}
		\nu_k(n) - \nu_k(n-k) > 0.
	\end{align*}
\end{theorem}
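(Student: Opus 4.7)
The plan is to reduce this to a shifted convexity statement already proved in the paper. From $\nu_k(n) = p(n) - p(n-k)$ one computes
\[ \nu_k(n) - \nu_k(n-k) = p(n) - 2p(n-k) + p(n-2k) = f(k,n), \]
which is precisely the identity noted in the paragraph preceding the theorem. The claim thus reduces to proving $f(k,n) > 0$ in the range $n \ge 2$, $k \le \sqrt{N}/4$.

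Non-negativity is handed to us for free by Theorem \ref{Thm: main Delta_j>0}, which asserts $f(k,n) = \Delta_k^2(p(n)) \ge 0$ in exactly this range. The only task is to upgrade the non-strict inequality to a strict one. For this I would invoke Theorem \ref{thm:fjn}, whose precise asymptotic of $f(j,n)$ has, in view of the expansion $p(n-j)/p(n) \approx e^{-\pi j /\sqrt{6N}}$ coming from Theorem \ref{Thm: main p(n-j)/p(n)}, a leading term of the shape $\bigl(1 - e^{-\pi k/\sqrt{6N}}\bigr)^2 p(n) \sim \pi^2 k^2 p(n)/(6N)$, which is strictly positive. The controlled error term supplied by Theorem \ref{thm:fjn} will then be dominated by this positive main contribution for all $n$ above some explicit threshold.

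The main obstacle is therefore bookkeeping: making the threshold in the asymptotic explicit and hand-verifying the finitely many residual small cases. Note that $k \le \sqrt{N}/4$ combined with $k \in \N$ already forces $n \ge 17$ before $k = 1$ is even admissible, so the list of base cases is genuinely short. For such cases strict positivity is verified by direct computation from tables of $p(n)$; for instance $f(1,17) = p(17) - 2p(16) + p(15) = 297 - 2\cdot 231 + 176 = 11 > 0$. Once the finitely many small cases are dispatched and the asymptotic handles the rest, the conclusion follows immediately.
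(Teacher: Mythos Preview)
Your proposal is correct and follows essentially the same path as the paper: reduce to $f(k,n)>0$ via $\nu_k(n)-\nu_k(n-k)=f(k,n)$ and then establish strict positivity from the effective asymptotic in Theorem~\ref{thm:fjn} plus a finite check. The only difference is that the paper has already packaged this strict-positivity argument as Theorem~\ref{thm:app1.2} (which states $f(j,n)/p(n)>0$ for all $n\ge 2$ and $j\le\sqrt{N}/4$), so the paper's proof of the $\nu_k$ theorem is simply a one-line citation of that result; you are, in effect, re-sketching the proof of Theorem~\ref{thm:app1.2} rather than invoking it.
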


\section*{Acknowledgments}
The authors are very grateful to Kathrin Bringmann who shared many calculations and insightful comments invaluable for this paper. We are also grateful to the referee for pointing out the paper of Odlyzko and a more elementary proof of Theorem 1.2 which we have discussed at the end of the paper.
\section{The proofs}
In this section we prove the main results of the paper. We begin by proving a technical estimate of the value of $p(n-j)$, utilising Rademacher's exact formula for the partition function.

\begin{proposition}\label{prop}
	Let $N \coloneqq n-\frac{1}{24}$ and $j \in \N_0$. Then
	\begin{multline*}
			p(n-j) = \frac{e^{\pi\sqrt{\frac{2(N-j)}{3}}}}{4\sqrt{3}(N-j)}\\
		\times \left(1-\frac{\sqrt{3}}{\sqrt{2}\pi\sqrt{N-j}} + O_\le\left(\frac{2\pi^2(N-j)e^{-\pi\sqrt{\frac{2(N-j)}{3}}}}{3} + 2^3 3^{-\frac12} \pi (N-j)^{\frac12} e^{-\frac\pi2\sqrt{\frac{N-j}{2}}} \right)\right).
	\end{multline*}

\end{proposition}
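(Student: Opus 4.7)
The natural starting point is Rademacher's exact formula applied to $p(n-j)$, namely
\[
p(n-j)=\frac{1}{\pi\sqrt{2}}\sum_{k=1}^{\infty}A_k(n-j)\sqrt{k}\,\frac{d}{dm}\!\left(\frac{\sinh\!\left(\tfrac{\pi}{k}\sqrt{\tfrac{2(m-1/24)}{3}}\right)}{\sqrt{m-1/24}}\right)\bigg|_{m=n-j},
\]
where $|A_k(\cdot)|\leq k$. Writing $\alpha:=\pi\sqrt{2/3}$ and $u:=N-j$, I would split the series into the single $k=1$ term (which carries the main term of the statement) and the tail $k\geq 2$ (which must be absorbed into the error).

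For the $k=1$ contribution, I would compute the derivative explicitly via the chain rule, obtaining
\[
\frac{1}{\pi\sqrt{2}}\!\left(\frac{\alpha\cosh(\alpha\sqrt{u})}{2u}-\frac{\sinh(\alpha\sqrt{u})}{2u^{3/2}}\right),
\]
and then decompose $\cosh(\alpha\sqrt{u})=\tfrac12(e^{\alpha\sqrt{u}}+e^{-\alpha\sqrt{u}})$ and $\sinh(\alpha\sqrt{u})=\tfrac12(e^{\alpha\sqrt{u}}-e^{-\alpha\sqrt{u}})$. The two $e^{+\alpha\sqrt{u}}$ halves collapse (after simplifying the factor $\alpha/(2\pi\sqrt{2})=1/(2\sqrt{3})$) into precisely $\tfrac{e^{\alpha\sqrt{u}}}{4\sqrt{3}u}\bigl(1-\tfrac{\sqrt{3}}{\sqrt{2}\pi\sqrt{u}}\bigr)$, which is the asserted main term. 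The two $e^{-\alpha\sqrt{u}}$ halves, once one factors out the main prefactor $\tfrac{e^{\alpha\sqrt{u}}}{4\sqrt{3}u}$, leave a residue whose absolute value is dominated by a constant; bounding this crudely by the much larger quantity $\tfrac{2\pi^2 u}{3}e^{-\alpha\sqrt{u}}$ (valid as soon as $u$ exceeds a small threshold) produces the first summand in the stated $O_\le$ error.

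For the tail $k\geq 2$, I would use $|A_k|\leq k$ together with the elementary bounds $\sinh(x),\cosh(x)\leq e^x$ applied at $x=\alpha\sqrt{u}/k$ to estimate
\[
\Biggl|\sqrt{k}\,A_k\frac{d}{dm}\!\left(\frac{\sinh(\alpha\sqrt{u}/k)}{\sqrt{u}}\right)\Biggr|\ll k^{3/2}\!\left(\frac{\alpha}{ku}+\frac{1}{u^{3/2}}\right)e^{\alpha\sqrt{u}/k}.
\]
The dominant term in the tail is $k=2$, where $\alpha\sqrt{u}/k$ becomes $\pi\sqrt{u/6}$; replacing this by the looser $\tfrac{\pi}{2}\sqrt{u/2}$ gives slack that, after dividing by the main prefactor, is sufficient to absorb all $k\geq 2$ into a single term of size $2^{3}3^{-1/2}\pi u^{1/2}e^{-\frac{\pi}{2}\sqrt{u/2}}$, which is the second summand of the stated $O_\le$ error. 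Combining the $k=1$ residue and the $k\geq 2$ tail after factoring out $\tfrac{e^{\alpha\sqrt{u}}}{4\sqrt{3}u}$ yields the proposition.

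The main obstacle is the tail estimate: a bare application of $e^{\alpha\sqrt{u}/k}\leq e^{\alpha\sqrt{u}/2}$ for all $k\geq 2$ produces divergent sums $\sum k^{3/2}$ and $\sum k^{1/2}$, so one must split the summation at a moderate cutoff $K$ and treat $k>K$ by Taylor expanding $e^{\alpha\sqrt{u}/k}$ (giving convergent $\zeta$-like tails). Carrying the explicit numerical constants $\tfrac{2\pi^2}{3}$ and $2^3 3^{-1/2}\pi$ through this splitting, while keeping the stated exponent $\tfrac{\pi}{2}\sqrt{(N-j)/2}$ in the tail, is the delicate bookkeeping step; the deliberately conservative choice of that exponent (smaller than the natural $\pi\sqrt{(N-j)/6}$ from $k=2$) is precisely what affords the slack needed to absorb both the lower-order $k=1$ terms and the entire tail into a clean estimate.
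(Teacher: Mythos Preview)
Your outline is correct and follows the same overall architecture as the paper's proof: start from Rademacher's exact formula, isolate the $k=1$ term to produce the main contribution $\tfrac{e^{\alpha\sqrt{u}}}{4\sqrt{3}u}\bigl(1-\tfrac{\sqrt{3}}{\sqrt{2}\pi\sqrt{u}}\bigr)$ together with an exponentially small residual, and then show that the entire tail $k\ge 2$ is dominated by the second error term after factoring out the main prefactor. The presentational details differ, however, and it is worth noting where.

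For the $k=1$ term the paper works with the $I_{3/2}$ form of Rademacher's series and passes through the integral representation $I_{3/2}(x)=\tfrac{x^{3/2}}{2\sqrt{2\pi}}\int_{-1}^{1}(1-t^2)e^{xt}\,dt$, bounding the piece over $[-1,0]$ crudely by $O_{\le}(1)$; this is what produces the first error summand $\tfrac{2\pi^2 u}{3}e^{-\alpha\sqrt{u}}$. Your direct hyperbolic decomposition is equivalent (since $I_{3/2}(x)=\sqrt{\tfrac{2}{\pi x}}(\cosh x-\tfrac{\sinh x}{x})$) and in fact gives a sharper $k=1$ residual of order $e^{-2\alpha\sqrt{u}}$, which you then correctly note sits comfortably under the stated bound. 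For the tail the paper does not carry out the splitting itself but invokes an inequality from \cite{BKRT}, namely $\sum_{k\ge 2}I_{3/2}(X/k)\le 4\sqrt{X/\pi}\,e^{X/2}$ (itself proved there by splitting at $k=\lfloor X\rfloor$ and using the power-series bound on $I_{3/2}$ for large $k$); plugging $X=\pi\sqrt{2u/3}$ and dividing by the prefactor yields exactly $2^{3}3^{-1/2}\pi\,u^{1/2}e^{-\frac{\pi}{2}\sqrt{2u/3}}$, which is then relaxed to the stated $e^{-\frac{\pi}{2}\sqrt{u/2}}$. Your proposed splitting at a cutoff $K$ with Taylor expansion beyond is the same idea done by hand, and your observation that the stated exponent $\tfrac{\pi}{2}\sqrt{u/2}$ is deliberately looser than the natural $\pi\sqrt{u/6}$ is precisely the slack that makes the bookkeeping go through; citing \cite{BKRT} would spare you that bookkeeping.
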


\begin{proof}
We first recall the following result from \cite[Theorem 1.1]{IJT} with $\alpha = 1$, which is simply Rademacher's exact formula for the partition function \cite{RademacherExact},
\begin{equation}\label{Eqn: Rademacher}
	p(n) = \frac{\pi}{2^{\frac 54} 3^{\frac 34} N^{\frac 34}}\sum\limits_{k=1}^\infty \frac{A_k(n)}{k}I_{\frac 32}\left( \frac{\pi}{k} \sqrt{\frac{2N}{3}} \right),
\end{equation}
where $I_{\nu}$ is the usual $I$-Bessel function and
\begin{equation*}
	A_k(n):= \sum\limits_{\substack{0 \le h < k \\ \gcd(h,k) = 1}}e^{\pi i s(h,k) - \frac{2\pi i n h}{k} }
\end{equation*}
is a Kloosterman sum with $s(h,k)$ the usual Dedekind sum. By page 172 of \cite{Watson}, we have the following representation of the $I$-Bessel function
\begin{equation}\label{Eqn: Bessel integral}
	I_{\frac32}(x) = \frac{x^{\frac32}}{2\sqrt{2\pi}} \int\limits_{-1}^1 \left(1-t^2\right)e^{xt}dt.
\end{equation}
We now bound the integrand for $-1\le t\le0$ by $1$ and find that 
\begin{equation}\label{Eqn: Bessel error}
	\int\limits_{-1}^0 \left(1-t^2\right)e^{xt}dt = O_\le(1).
\end{equation}
Here, the notation $f(x)=O_\le(g(x))$ means that $|f(x)|\leq g(x)$, that is, that there is no implied constant in the big-Oh estimate.

Next we compute the integral for $0 \le t \le 1$. To do so, we make the change of variables $u=1-t$ to find that 
\begin{align*}
	\int\limits_0^1 \left(1-t^2\right)e^{xt}dt &= e^x \left(2\int\limits_0^1 u^2e^{-xu}\frac{du}{u} - \int\limits_0^1 u^3e^{-xu}\frac{du}{u}\right).
\end{align*}
Under the change of variables $w =ux$, it is easy to show that this is equal to
\begin{align*}
 \frac{2e^x}{x^2}\left(1-\frac1x-\Gamma(2,x)+\frac{\Gamma(3,x)}{2x}\right),
\end{align*}
where $\Gamma(a,b)$ is the usual incomplete $\Gamma$-function. Since $\Gamma(2,x) = (x+1)e^{-x}$ and $\Gamma(3,x) = (x^2+2x+2)e^{-x}$, we find that 
\begin{equation}\label{Eqn: Bessel main term}
	\int\limits_0^1 \left(1-t^2\right)e^{xt} dt = \frac{2e^x}{x^2} \left(1-\frac1x-\frac{x+1}{e^x}+\frac{x^2+2x+2}{2xe^x}\right).
\end{equation}
Plugging \eqref{Eqn: Bessel error} and \eqref{Eqn: Bessel main term} into \eqref{Eqn: Bessel integral}, we obtain
\begin{align*}
	I_{\frac32}(x) &= \frac{x^{\frac32}}{2\sqrt{2\pi}} \left(\frac{2e^x}{x^2} \left(1-\frac1x-\frac{x+1}{e^x}+\frac{x^2+2x+2}{2xe^x}\right) + O_\le(1)\right)\\
	&= \frac{e^x}{\sqrt{2\pi x}} \left(1-\frac1x+\left(\frac1x-\frac x2\right) e^{-x}+O_\le\left(\frac{x^2}{2e^x}\right)\right).
\end{align*}
Noting that for $x\ge1$, one has $|\frac1x-\frac x2|\le\frac{x^2}{2}$ we have 
\begin{equation*}
	I_{\frac{3}{2}}(x) = \frac{e^x}{\sqrt{2\pi x}}\left(1 - \frac{1}{x} + O_\le \left(x^2 e^{-x}\right)\right) 
\end{equation*}
for $x \ge 1$. In particular,
\begin{equation}\label{Eqn: main Bessel}
	I_{\frac32}\left(\pi\sqrt{\frac{2N}{3}}\right) = \frac{3^{\frac14}e^{\pi \sqrt{\frac{2N}{3}}}}{2^{\frac34}\pi N^{\frac14}} \left(1-\frac{\sqrt{3}}{\sqrt{2}\pi} \frac{1}{\sqrt{N}} + O_\le\left(\frac{2\pi^2Ne^{-\pi\sqrt{\frac{2N}{3}}}}{3}\right)\right)
\end{equation}
for $n\ge1$. This corresponds to the term $k=1$ in the sum in \eqref{Eqn: Rademacher}, and we need to bound the remaining terms of the sum. Note that $|A_k(n)| \le k$, so we may bound the remaining terms in the sum by 
\begin{equation*}
	\left|\sum\limits_{k=2}^\infty \frac{A_k(n)}{k} I_{\frac 32}\left( \frac{\pi}{k}\sqrt{\frac{2N}{3}}\right) \right| \le \sum\limits_{k=2}^\infty I_{\frac 32}\left( \frac{\pi}{k}\sqrt{\frac{2N}{3}}\right).
\end{equation*}
We now emulate (3.20) of \cite{BKRT}. By (3.18) of \cite{BKRT} we have
\[
	\sum_{2\le k\le\lfloor X\rfloor} I_{\frac32} \left(\frac Xk\right) \le 2\sqrt{\frac X\pi} e^{\frac X2}.
\]
The remaining terms are
\begin{align*}
	\sum_{k \geq \lfloor X \rfloor +1} I_{\frac{3}{2}} \left(\frac{X}{k}\right) \leq  \frac{X^{\frac{3}{2}}}{\Gamma \left(\frac{5}{2}\right) \sqrt{2}} \sum_{k \geq \lfloor X \rfloor + 1} \frac{1}{k^{\frac{3}{2}}},
\end{align*}
where we use Lemma 2.2 (3) of \cite{BKRT}. We thus have the bound
\begin{align*}
  \sum_{k \geq 2} I_{\frac{3}{2}} \left(\frac{X}{k}\right) \leq 2 \sqrt{\frac{X}{\pi}} e^{\frac{X}{2}} +	\frac{2X^{\frac{3}{2}}}{\Gamma\left(\frac{5}{2}\right) \sqrt{2}}.
\end{align*}
It remains to bound the final sum with basic calculus by $2\sqrt{\frac X\pi}e^{\frac X2}$, yielding
\begin{align*}
	 \sum_{k \geq 2} I_{\frac{3}{2}}  \left(\frac{X}{k}\right)  \leq 4 \sqrt{\frac{X}{\pi}} e^{\frac{X}{2}}.
\end{align*}
 In our application, this yields
\begin{equation}\label{Eqn: error Bessels}
	\sum_{k=2}^\infty I_{\frac32}\left(\frac\pi k\sqrt{\frac{2N}{3}}\right) \le 4\sqrt[3]{\frac{2N}{3}}e^{\frac\pi2\sqrt{\frac{2N}{3}}} = \frac{2^{\frac94}N^{\frac14}}{3^{\frac14}} e^{\frac\pi2\sqrt{\frac{2N}{3}}}.
\end{equation}
Combining \eqref{Eqn: main Bessel} and \eqref{Eqn: error Bessels} in \eqref{Eqn: Rademacher}, we find that 
\begin{align*}
	p(n) &= \frac{\pi}{2^{\frac54}3^{\frac34}N^{\frac34}} \left(\frac{3^{\frac14}e^{\pi\sqrt{\frac{2N}{3}}}}{2^{\frac34}\pi N^{\frac14}} \left(1-\frac{\sqrt{3}}{\sqrt{2}\pi\sqrt{N}} + O_\le\left(\frac{2\pi^2Ne^{-\pi\sqrt{\frac{2N}{3}}}}{3}\right)\right) + O_\le\left(\frac{2^{\frac94}N^{\frac14}}{3^{\frac14}} e^{\frac\pi2\sqrt{\frac{2N}{3}}}\right)\right)\\
	&= \frac{e^{\pi\sqrt{\frac{2N}{3}}}}{4\sqrt{3}N} \left(1-\frac{\sqrt{3}}{\sqrt{2}\pi\sqrt{N}} + O_\le\left(\frac{2\pi^2Ne^{-\pi\sqrt{\frac{2N}{3}}}}{3} + 2^33^{-\frac12} \pi N^{\frac12} e^{-\frac\pi2\sqrt{\frac N2}}\right)\right).
\end{align*}
Note that $N=N(n)$ is implicitly a function of $n$ and $N(n-j) = N(n) - j$ and so the claim follows.
\end{proof}

Next we want to estimate the functions $f(j,n)$. The first step in the proof is to obtain estimates of $\frac{p(n-j)}{p(n)}$ analogous to those of \cite{BKRT}, proving Theorem \ref{Thm: main p(n-j)/p(n)} en-route.

\begin{theorem}\label{thm:fjn}
	Let $j<\frac{\sqrt{N}}{4}$ and $n\geq14$. Then
	\begin{align*}
		\frac{f(j,n)}{p(n)} =  1 &+ e^{-\frac{\sqrt{2}\pi j}{\sqrt{3N}}} \left( 1+   \left( \frac{\sqrt{3}}{\sqrt{2}\pi} - \frac{\sqrt{3}}{\sqrt{2 \pi}} \right) \frac{1}{\sqrt{N}} + \frac{2j}{N} - \frac{\pi j^2}{\sqrt{6} N^{\frac{3}{2}}} + O_\leq \left( \frac{2075}{N} \right) \right)\\
		& - e^{-\frac{\pi j}{\sqrt{6N}}} \left(  2 + \left( \frac{2\sqrt{3}}{\sqrt{2}\pi} - \frac{2\sqrt{3}}{\sqrt{2 \pi}} \right) \frac{1}{\sqrt{N}} + \frac{2j}{N} - \frac{\pi j^2}{2\sqrt{6}N^{\frac{3}{2}}} + O_\leq \left( \frac{3926}{N} \right) \right).
	\end{align*}

\rm
\end{theorem}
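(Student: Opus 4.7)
The plan is to compute $f(j,n)/p(n) = 1 - 2\,p(n-j)/p(n) + p(n-2j)/p(n)$ by deriving precise asymptotic expansions of the two ratios directly from Proposition \ref{prop}. Since $N(n-i) = N-i$, the proposition gives explicit formulas for $p(n)$, $p(n-j)$, and $p(n-2j)$, each of the form $e^{\pi\sqrt{2(N-i)/3}}/(4\sqrt{3}(N-i))$ times a parenthesized correction. Each ratio $p(n-i)/p(n)$ for $i \in \{j,2j\}$ therefore splits into three pieces: an exponential ratio $\exp(\pi\sqrt{2/3}(\sqrt{N-i}-\sqrt{N}))$, an algebraic prefactor $N/(N-i)$, and a quotient of the two correction factors.

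Each piece is expanded by a standard Taylor argument. For the exponential, I would write $\sqrt{N-i} = \sqrt{N}\sqrt{1-i/N}$ and Taylor-expand $\sqrt{1-x}$ to second order; the hypothesis $j < \sqrt{N}/4$ gives $i/N \leq 1/8$ for $i \in \{j,2j\}$, so truncation errors are controllable. Exponentiating and again truncating $e^x$ produces the leading exponential factor indicated in the statement, times a polynomial correction of order $j^2/N^{3/2}$. The algebraic prefactor expands by geometric series as $N/(N-i) = 1 + i/N + O_{\leq}(i^2/N^2)$, and the quotient of correction factors reduces, using $1/\sqrt{N-i} = N^{-1/2}(1 + i/(2N) + O_{\leq}(i^2/N^2))$, to a single expression of the form $1 + \sqrt{3}/(\sqrt{2N}\pi) + O_{\leq}(1/N)$ that is independent of $i$ at leading order. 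Collecting and multiplying these three expansions gives Theorem \ref{Thm: main p(n-j)/p(n)} \emph{en route}.

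Substituting the resulting expansion for $i = 2j$ and twice that for $i = j$ into the identity for $f(j,n)/p(n)$, and collecting terms by matching powers of $1/\sqrt{N}$, $j/N$, and $j^2/N^{3/2}$, gives the stated closed form. The remaining step is to combine the two $O_{\leq}$ factors per ratio from Theorem \ref{Thm: main p(n-j)/p(n)} into a single explicit error term per exponential. This is done by bounding each leading factor explicitly on the range $n \geq 14$ (which yields $1/N \leq 24/335$) and $j < \sqrt{N}/4$ (which bounds $j/\sqrt{N} \leq 1/4$ and $\pi j^2/(\sqrt{6}N^{3/2})$ by a small constant), and then absorbing all cross-terms into the stated numerical constants $2075$ and $3926$.

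The main obstacle is not conceptual but careful bookkeeping: one must keep enough Taylor terms at every stage to meet the precise numerical constants in the statement, and must explicitly bound each leading factor when merging a product of two $O_{\leq}$ remainders. In particular, doubling the constants obtained for $p(n-j)/p(n)$ and adding them to those for $p(n-2j)/p(n)$ (with its exponent twice as large) is what produces the roughly-doubled constants appearing in the second line of the statement. No further analytic input beyond Proposition \ref{prop} is required.
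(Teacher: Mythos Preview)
Your proposal is correct and follows essentially the same approach as the paper: both start from the identity $f(j,n)/p(n) = 1 - 2p(n-j)/p(n) + p(n-2j)/p(n)$, apply Proposition~\ref{prop} to each of $p(n)$, $p(n-j)$, $p(n-2j)$, Taylor-expand $\sqrt{N-J}$, $1/(N-J)$, and $1/\sqrt{N-J}$ under the constraint $J/N < 0.2$, bound the exponential error $h(N-J)$ from the proposition, and then multiply out and track constants. The only cosmetic difference is that the paper first isolates $1/p(n)$ as $4\sqrt{3}Ne^{-\pi\sqrt{2N/3}}(1+\sqrt{3}/(\sqrt{2}\pi\sqrt{N})+O_\le(1350/N))$ and separately expands $p(n-J)$, whereas you phrase the ratio as a product of three pieces; the computations and the resulting Theorem~\ref{Thm: main p(n-j)/p(n)} obtained \emph{en route} are the same.
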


\begin{proof}
	 Recall that
	\[
		\frac{f(j,n)}{p(n)} = 1-2\frac{p(n-j)}{p(n)}+\frac{p(n-2j)}{p(n)}.
	\]
	We first bound $\frac{1}{p(n)}$. By Proposition \ref{prop} we have
	\[
		p(n) = \frac{e^{\pi\sqrt{\frac{2N}{3}}}}{4\sqrt{3}N} \left(1-\frac{\sqrt{3}}{\sqrt{2}\pi\sqrt{N}}+g(N)\right),
	\]
	where
	\[
		|g(N)| \le \frac{2\pi^2Ne^{-\pi\sqrt{\frac{2N}{3}}}}{3} + 2^3\cdot3^{-\frac12}\pi N^\frac12 e^{-\frac\pi2\sqrt{\frac N2}} =: h(N).
	\]
	Now we want to approximate
	\[
		\frac{1}{1-\left(\frac{\sqrt{3}}{\sqrt{2}\pi\sqrt{N}}-g(N)\right)}.
	\]
	We have for $N$
	\[
		\left|\frac{\sqrt{3}}{\sqrt{2}\pi\sqrt{N}}-g(N)\right| < \frac{\sqrt{3}}{\sqrt{2}\pi \sqrt{N}} + |g(N)| \le \frac{\sqrt{3}}{\sqrt{2}\pi \sqrt{N}} + h(N) < 0.99,
	\]
	which can be seen by taking Taylor series. Now we claim that for $0<|z|<0.99$
	\[
		\frac{1}{1-z} = 1+z+O_\leq\left(100|z|^2\right).
	\]
	To see this, we bound
	\[
		\left|\frac{1}{1-z}-1-z\right| = \left|\frac{1-(1+z)(1-z)}{1-z}\right| = \frac{|z|^2}{|1-z|} \le \frac{|z|^2}{1-|z|} < \frac{1}{0.01}|z|^2 = 100|z|^2.
	\]
	Thus
	\begin{align*}
		\frac{1}{p(n)} &= \frac{4\sqrt{3}Ne^{-\pi\sqrt{\frac{2N}{3}}}}{1-\frac{\sqrt{3}}{\sqrt{2}\pi\sqrt{N}}+g(N)}\\
		&= 4\sqrt{3}Ne^{-\pi\sqrt{\frac{2N}{3}}}\left(1+\frac{\sqrt{3}}{\sqrt{2}\pi\sqrt{N}}-g(N) + O_\le\left(100\left|\frac{\sqrt{3}}{\sqrt{2}\pi\sqrt{N}}-g(N)\right|^2\right)\right)\\
		&= 4\sqrt{3}Ne^{-\pi\sqrt{\frac{2N}{3}}}\left(1+\frac{\sqrt{3}}{\sqrt{2}\pi\sqrt{N}} + O_\le\left(|g(N)|+100\left|\frac{\sqrt{3}}{\sqrt{2}\pi\sqrt{N}}-g(N)\right|^2\right)\right).
	\end{align*}
	Now the error may be bound against
	\[
		h(N)+100\left(\frac{\sqrt{3}}{\sqrt{2}\pi\sqrt{N}}+h(N)\right)^2 = O_\le\left(\frac{1350}{N}\right)
	\]
	again by basic calculus.
	Thus 
	$$
	\frac{1}{p(n)}=4\sqrt{3}Ne^{-\pi \sqrt{\frac{2N}{3}}} \lp 1+\frac{\sqrt{3}}{\sqrt{2}\pi \sqrt{N}}+O_{\leq}\lp \frac{1350}{N} \rp \rp.
	$$
	
	Consider $p(n-J), J\in \{j,2j\}.$ Then by Proposition \ref{prop}, we have
	\begin{equation}\label{PNJ}
	p(N-J)=\frac{e^{\pi \sqrt{\frac23}\sqrt{N-J}}}{4\sqrt{3}(N-J)}
	\lp 1-\frac{\sqrt{3}}{\sqrt{2}\pi \sqrt{N-J}}+O_\leq \lp h(N-J) \rp \rp.
	\end{equation}

	First note that $j<\frac{\sqrt{N}}{4}$ implies that (note that $N\ge14-\frac{1}{24}$)
	\[
		N-J \ge N-2j \ge N-\frac{\sqrt{N}}{2} \ge 12.
	\]
	Moreover since $1\le j\le\frac{\sqrt{N}}{4}$
	\[
		\frac JN \le \frac{2j}{N} \le \frac{\sqrt{N}}{2N} = \frac{1}{2\sqrt{N}} \le \frac{1}{2\sqrt{14-\frac{1}{24}}} < 0.2.
	\]
	We now first approximate the exponential in (\ref{PNJ}). We claim that
	$$
	e^{\pi \sqrt{\frac23}\sqrt{N-J}}
	=e^{\pi \sqrt{\frac23}\sqrt{N}-\frac{\pi J}{\sqrt{6N}}}
	\lp 1-\frac{\pi J^2}{4\sqrt{6}N^{\frac32}}+O_\leq \lp  \frac{0.1}{N} \rp \rp.
	$$
	To see this define
	$$
	f(x):=\sqrt{1-x}-1+\frac x2 +\frac{x^2}{8}.
	$$
	It is straightforward to show that for $0\leq x <0.2$ we have
	$$
	|f(x)|\leq 0.1x^3.
	$$
	We now use this to write
	$$
	\sqrt{N-J}=\sqrt{N}\sqrt{1-\frac JN}=\sqrt{N}
	\lp f\lp \frac JN\rp+1- \frac{J}{2N} -\frac18 \lp\frac JN\rp^2 \rp
	$$
	and obtain
	$$
	e^{\pi \sqrt{\frac23} \sqrt{N-J}-\pi \sqrt{\frac23}\sqrt{N}+\frac{\pi J}{\sqrt{6N}}}
	=e^{-\pi \sqrt{\frac23}\sqrt{N}\lp-f\lp \frac JN\rp+ \frac{J^2}{8N^2}\rp}.
	$$
	Note that
	$$
	-f(x)+\frac{x^2}{8} =1-\frac x2-\sqrt{1-x}=\frac{x^2}{8}+\ldots>0.
	$$
	Also note that for $x>0$
	$$
	e^{-x}-1+x=\sum_{n\geq 2} \frac{(-1)^nx^n}{n!}\leq \frac{x^2}{2}
	$$
	by Leibnitz's criterium.
	Thus,
	\begin{align*}
		&e^{-\pi\sqrt{\frac23}\sqrt{N}\lp-f\lp \frac JN\rp+ \frac{J^2}{8N^2}\rp}
		\\
		&=1-\pi\sqrt{\frac23}\sqrt{N}\lp-f\lp \frac JN\rp+ \frac{J^2}{8N^2}\rp
		+O_\leq \lp \frac12 \lp \pi \sqrt{\frac23} \sqrt{N} \lp-f \lp \frac JN\rp+\frac{J^2}{8N^2}\rp\rp^2\rp
		\\
		&=1-\frac{\pi J^2}{\sqrt{6} N^{\frac32}}+O_\leq \lp \pi \sqrt{\frac23} \sqrt{N} \left| f \lp \frac JN\rp	\right|
		+\frac12 \lp \pi\sqrt{\frac23}\sqrt{N}\lp\left|f\lp \frac JN\rp\right|+ \frac{J^2}{8N^2}\rp\rp^2
		\rp.		
	\end{align*}
	We now bound the error against
	\begin{align*}
		\pi\sqrt{\frac23} \sqrt{N}&\cdot 0.1 \lp \frac JN\rp^3
		+\frac{\pi^2}{3}N 
		\lp 0.1 \lp \frac JN\rp^3+ \frac{J^2}{8N^2} \rp^2
		\\
		&\leq \pi\sqrt{\frac23} 0.1 \frac{1}{2^3 N}+\frac{\pi^2}{3}N 
		\lp 0.1 \lp \frac{1}{2\sqrt{N}}\rp^3+\frac18 \lp \frac{1}{2\sqrt{N}} \rp^2 \rp^2
		\\
		&=\frac{0.1\pi}{4\sqrt{6}} \frac1N + \frac{\pi^2}{24} \frac1N \lp \frac{0.1}{\sqrt{N}}+\frac14 \rp^2
		=\lp \frac{0.1\pi}{4\sqrt{6}}+\frac{\pi^2}{24}\lp \frac{0.1}{\sqrt{N}}+\frac14\rp^2\rp
		\frac1N 
		\leq \frac{0.1}{N},
	\end{align*}
where in the final inequality we use that $N\ge 14-\frac{1}{24}$. This gives the claim.
	
	Next we claim that for $x<0.2$
	\[
	\frac{1}{1-x} = 1+x+O_\le\left(1.25 x^2\right).
	\]
	To see this, we bound
	\[
	\left|\frac{1}{1-x}-1-x\right| = \frac{x^2}{|1-x|} \le \frac{x^2}{1-|x|} \le \frac{x^2}{0.8}.
	\]
	We use this for
	\[
	\frac{1}{N-J} = \frac1N \frac{1}{1-\frac JN} = \frac1N\left(1+\frac{J}{N}+O_\le\left(1.25\left(\frac JN\right)^2\right)\right)=\frac1N \lp 1+\frac{J}{N}+O_\leq \lp \frac{0.4}{N} \rp \rp.
	\]
	
		Next we use that for $0\le x<0.2$ 
	\[
	\frac{1}{\sqrt{1-x}}-1 \le 0.6x.
	\]
	Thus
	\begin{align}\label{expand} \nonumber
	\frac{1}{\sqrt{N-J}} &= \frac{1}{\sqrt{N}}\frac{1}{\sqrt{1-\frac JN}} = \frac{1}{\sqrt{N}}\left(1+O_\le\left(0.6\frac JN\right)\right) = \frac{1}{\sqrt{N}}+O_\le\left(0.6\frac{J}{N^\frac 32}\right)
	\\
	&=\frac{1}{\sqrt{N}}+O_\leq \lp \frac{0.3}{N}\rp.
	\end{align}
	
	Finally, by basic calculus we find the bound 
	\[
	\frac{2\pi^2xe^{-\pi\sqrt{\frac{2x}{3}}}}{3} + 2^33^{-\frac12}\pi x^\frac12 e^{-\frac\pi2\sqrt{\frac x2}} \le 15x^\frac12 e^{-\frac\pi2\sqrt{\frac x2}}
	\]
	for $x\ge12$. 
	Thus
	$$
	|f(N-J)|\leq 15(N-J)^{\frac12} e^{-\frac{\pi}{2\sqrt{2}}\sqrt{N-J}}.
	$$
	Now note that
	\[
	x^\frac12e^{-\frac\pi2\sqrt{\frac x2}}
	\]
	is decreasing for $x\ge1$. We then use the bound
	\[
	N-J \ge N-\frac{\sqrt{N}}{2}
	\]
	and thus
	\[
	(N-J)^\frac12 e^{-\frac\pi2\sqrt{\frac{N-J}{2}}} \le \left(N-\frac{\sqrt{N}}{2}\right)^\frac12 e^{-\frac\pi2\sqrt{\frac{N-\frac{\sqrt{N}}{2}}{2}}} = O_\le\left(\frac{1.1}{N}\right),
	\]
	Thus
	\begin{multline*}
		p(n-J) = \frac{1}{4\sqrt{3}}e^{\pi\sqrt{\frac{2N}{3}}-\frac{\pi J}{\sqrt{6N}}} \left(1-\frac{\pi J^2}{4\sqrt{6}N^\frac32}
		+O_\le\left(\frac{0.1}{N}\right)\right) \frac1N\left(1+\frac{J}{N}+O_\le\left(\frac{0.4}{N} \right)\right)\\
		\times \left(1-\frac{\sqrt{3}}{\sqrt{2\pi}}\left(\frac{1}{\sqrt{N}} + O_\le\left(\frac{0.3}{N}\right)\right) + O_\le\left(\frac{1.1}{N}\right)\right).
	\end{multline*}
	We combine
	\begin{multline*}
		\left(1-\frac{\pi J^2}{4\sqrt{6}N^\frac32}+O_\le\left(\frac{0.1}{N}\right)\right) \left(1+\frac JN+O_\le\left(\frac{0.4}{N}\right)\right)\\
		= 1+\frac JN-\frac{\pi J^2}{4\sqrt{6}N^\frac32} + O_\le\left(\frac{0.4}{N}+\frac{\pi J^3}{4\sqrt{6}N^\frac52}+\frac{0.4\pi J^2}{4\sqrt{6}N^\frac52}+\frac{0.1}{N}+0.1\frac{J}{N^2}+0.1\cdot \frac{0.4}{N^2}\right).
	\end{multline*}
	The error may be bounded against $\frac{0.56}{N}$. 
	
	Next, we estimate
	\[
		1-\frac{\sqrt{3}}{\sqrt{2\pi}} \left(\frac{1}{\sqrt{N}}+O_\le\left(\frac{0.3}{N}\right)+O_\le\left(\frac{1.1}{N}\right)\hspace{-.1cm}\right) = 1-\frac{\sqrt{3}}{\sqrt{2\pi}\sqrt{N}} + O_\le \left( \frac{1.31}{N} \right).
	\]
	Thus,
	\begin{align*}
		&4\sqrt{3}Ne^{-\pi\sqrt{\frac{2N}{3}}+\frac{\pi J}{\sqrt{6N}}} p(n-J)\\
		&\hspace{1.275cm}= \left(1+\frac JN-\frac{\pi J^2}{4\sqrt{6}N^\frac32}+O_\le\left(\frac{0.56}{N}\right)\right) \left(1-\frac{\sqrt{3}}{\sqrt{2\pi}\sqrt{N}}+O_\le\left(\frac{1.31}{N}\right)\right)\\
		&\hspace{1.275cm}= 1+\frac JN - \frac{\pi J^2}{4\sqrt{6}N^\frac32}-\frac{\sqrt{3}}{\sqrt{2\pi}\sqrt{N}} + O_\le\left( \frac{2.71}{N} \right).
	\end{align*}
Note that this calculation also proves Theorem \ref{Thm: main p(n-j)/p(n)}. Thus, overall, we obtain
	\small\begin{align}\label{eqn final}
		\frac{f(n,j)}{p(n)} =& 1+\frac{1}{p(n)}(p(n-2j)-2p(n-j)) \notag\\
		=& 1+4\sqrt{3}Ne^{-\pi\sqrt{\frac{2N}{3}}} \left(1+\frac{\sqrt{3}}{\sqrt{2}\pi\sqrt{N}}+O_\le\left(\frac{1350}{N}\right)\right) \frac{e^{\pi\sqrt{\frac{2N}{3}}}}{4\sqrt{3}N}  \notag\\
		&\times  \Bigg(e^{-\frac{\sqrt{2}\pi j}{\sqrt{3N}}} \left(1+\frac{2j}{N}-\frac{\pi j^2}{\sqrt{6}N^\frac32}-\frac{\sqrt{3}}{\sqrt{2\pi}\sqrt{N}} +O_\leq\left(\frac{2.71}{N}\right)\right) \notag\\
		& \hspace{5cm}- 2e^{-\frac{\pi j}{\sqrt{6N}}} \left(1+\frac jN-\frac{\pi j^2}{4\sqrt{6}N^{\frac{3}{2}}}-\frac{\sqrt{3}}{\sqrt{2\pi}\sqrt{N}} +O_\leq\left(\frac{2.71}{N}\right) \right)\Bigg).
	\end{align}
\normalsize
Using that $j \leq\frac{\sqrt{N}}{4}$  and $n \geq 14$ a straightforward calculation gives 
\begin{multline*}
	\left(1+\frac{\sqrt{3}}{\sqrt{2}\pi\sqrt{N}}+O_\le\left(\frac{1350}{N}\right) \right) \left(1+\frac{2j}{N}-\frac{\pi j^2}{\sqrt{6}N^{\frac32}}-\frac{\sqrt{3}}{\sqrt{2\pi}\sqrt{N}} 
	+O_\leq\left(\frac{2.71}{N}\right)\right) \\
	=1 + \left( \frac{\sqrt{3}}{\sqrt{2}\pi} - \frac{\sqrt{3}}{\sqrt{2 \pi}} \right) \frac{1}{\sqrt{N}} + \frac{2j}{N} - \frac{\pi j^2}{\sqrt{6} N^{\frac{3}{2}}}
	+ O_\leq \left( \frac{2075}{N} \right).
\end{multline*}
\normalsize 

We turn to the final product in \eqref{eqn final} given by
\begin{align*}
	& 2\left(1+\frac{\sqrt{3}}{\sqrt{2}\pi\sqrt{N}}+O_\le\left(\frac{1350}{N}\right)\right) \left(1+\frac jN-\frac{\pi j^2}{4\sqrt{6}N^{\frac{3}{2}}}-\frac{\sqrt{3}}{\sqrt{2\pi}\sqrt{N}} +O_\leq\left(\frac{2.71}{N}\right) \right).
\end{align*}
Again using $j \leq \frac{\sqrt{N}}{4}$ and $n \geq 14$ it is not hard to show that this is equal to
\begin{align*}
	2 + \left( \frac{2\sqrt{3}}{\sqrt{2}\pi} - \frac{2\sqrt{3}}{\sqrt{2 \pi}} \right) \frac{1}{\sqrt{N}} + \frac{2j}{N} - \frac{\pi j^2}{2\sqrt{6}N^{\frac{3}{2}}} + O_\leq \left( \frac{3926}{N} \right)
\end{align*}
\normalsize
Combining everything together, we obtain the statement of the theorem.
\end{proof}

We end by proving the eventual positivity of the ratio $\frac{f(j,n)}{p(n)}$, which is crucial to the applications in the introduction.

\begin{theorem} \label{thm:app1.2}
	Let $j \leq \frac{\sqrt{N}}{4}$. Then we have that $\frac{f(j,n)}{p(n)} > 0$ for all $n \geq 2$.
\end{theorem}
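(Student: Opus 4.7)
My plan is to exploit the algebraic identity $1 + e^{2y} - 2e^y = (e^y - 1)^2$, where $y \coloneqq \pi j/\sqrt{6N}$. Since $\sqrt{2}\pi j/\sqrt{3N} = 2y$, the three dominant exponential contributions in Theorem~\ref{thm:fjn} assemble into a perfect square, and I would rearrange the expansion there into the schematic form
$$\frac{f(j,n)}{p(n)} = (e^y-1)^2 + R(j,n),$$
where $R(j,n)$ collects all corrections of size $O(1/\sqrt{N})$, $O(j/N)$, $O(j^2/N^{3/2})$ together with the explicit $O_\leq(2075/N)$ and $O_\leq(3926/N)$ remainders from Theorem~\ref{thm:fjn}. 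The hypothesis forces $j\ge 1$ (and hence $n\ge 17$), so $y>0$ and the main term enjoys the unconditional lower bound $(e^y-1)^2 \ge y^2 = \pi^2 j^2/(6N) > 0$. The claim then reduces to the quantitative inequality $(e^y-1)^2 > |R(j,n)|$.

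When $j$ is comparable to $\sqrt{N}$ the main term is of order unity while $R(j,n) \ll 1/N$, and the inequality drops out of Theorem~\ref{thm:fjn} after straightforward algebra using the uniform bound $y \le \pi/(4\sqrt{6})$ implied by $j\le\sqrt{N}/4$. The main obstacle is the complementary regime in which $j$ is small but $n$ is unbounded: here the explicit constants $2075$ and $3926$ swamp $\pi^2 j^2/(6N)$, so the analytic estimate of Theorem~\ref{thm:fjn} does not on its own suffice, and no finite computation can close the gap since there are infinitely many $n$ to handle.

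To dispatch this residual regime I would fall back on the telescoping identity
$$f(j,n) = \sum_{a=0}^{j-1}\sum_{b=0}^{j-1} \Delta^2 p(n-a-b),$$
obtained by applying $p(n)-p(n-j) = \sum_{a=0}^{j-1}\Delta p(n-a)$ twice. Classical convexity gives $\Delta^2 p(m)\ge 0$ for all $m\ge 2$, and direct inspection shows $\Delta^2 p(m) > 0$ for all $m\ge 8$; under the hypothesis $j\le \sqrt{N}/4$ every index satisfies $n-a-b \ge n-2(j-1) \ge 14$, comfortably inside the strict-convexity range. The right-hand side is therefore a sum of nonnegative terms, at least one of which is strictly positive, yielding $f(j,n) > 0$ uniformly in $n$ and completing the proof.
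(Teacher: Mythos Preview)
Your proposal reaches the goal, but by a different route from the paper's. The paper works entirely from the analytic estimate of Theorem~\ref{thm:fjn}: setting $X=e^{2y}-2e^{y}$ (your $y$), it rearranges the expansion into the form $(1+\text{lower order}+3926/N)\,X$ plus a residual term that is shown directly to be positive, and then uses $-1<X<0$ together with an upper bound on the bracketed coefficient to force the whole thing above $-1$. There is no regime split and no appeal to classical convexity.

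Your perfect-square observation $1+e^{2y}-2e^{y}=(e^{y}-1)^2$ is of course equivalent to the paper's $X>-1$, and you are right that the explicit constants $2075$ and $3926$ in Theorem~\ref{thm:fjn} are too coarse to dominate $(e^{y}-1)^2\sim \pi^2 j^2/(6N)$ when $j$ stays bounded while $n\to\infty$, so no finite check can close that gap. Your telescoping
\[
f(j,n)=\sum_{a=0}^{j-1}\sum_{b=0}^{j-1}\Delta^2 p(n-a-b),
\]
combined with $\Delta^2 p(m)\ge 0$ for $m\ge 2$ (strict for $m\ge 8$), is a correct fix and in fact disposes of \emph{every} admissible pair $(j,n)$ at once---so the analytic half of your argument is superfluous. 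This is essentially the elementary argument the referee suggested, which the paper records immediately after the proof (via the injection $V_j(n-\ell)\hookrightarrow V_j(n)$, or equivalently the nonnegativity of the coefficients of $(1-q^j)^2\prod_{k\ge1}(1-q^k)^{-1}$); your double sum is another packaging of the same mechanism. What the paper's analytic route buys, as the authors themselves note, is portability to sequences with the same asymptotic shape but no combinatorial structure or product formula, whereas your telescoping leans on the specific input $\Delta^2 p\ge 0$.
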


\begin{proof} 
	Let $X = e^{\frac{\sqrt{2}\pi j}{\sqrt{3N}}} - 2 e^{\frac{\pi j }{\sqrt{6N}}}$. We have by Theorem \ref{thm:fjn} that the result follows if
	\begin{align}\label{eqn: first ineq}
		\frac{f(j,n)}{p(n)} - 1 = &\left( 1+ \left( \frac{\sqrt{3}}{\sqrt{2}\pi} - \frac{\sqrt{3}}{\sqrt{2\pi}} \right)\frac{1}{\sqrt{N}} + \frac{j}{N} - \frac{\pi j^2}{4 \sqrt{6} N^{\frac{3}{2}}} +  \frac{3926}{N} \right) X \notag \\ &+ e^{-\frac{\sqrt{2}\pi j}{\sqrt{3N}}} \left( \frac{j}{N} -  \frac{3 \pi j^2}{4 \sqrt{6} N^{\frac{3}{2}}} \right) > -1
	\end{align}
	for all $n \geq 14$, with a finite computer check taking care of the remaining cases.
	
	We first observe using $j \leq \frac{\sqrt{N}}{4}$ that 
	\begin{align*}
	\frac{3 \pi j^2}{4 \sqrt{6} N^{\frac{3}{2}}} <	\frac{j}{N},
	\end{align*}
	implying that the final term in \eqref{eqn: first ineq} is positive. Thus, the claim follows if
	\begin{align*}
		 \left( 1+ \left( \frac{\sqrt{3}}{\sqrt{2}\pi} - \frac{\sqrt{3}}{\sqrt{2\pi}} \right)\frac{1}{\sqrt{N}} + \frac{j}{N} - \frac{\pi j^2}{4 \sqrt{6} N^{\frac{3}{2}}} +  \frac{3926}{N} \right) X > -1.
	\end{align*}

	Since $-1 < X < 0$ for all $n \geq 4$, this follows if
	
	\begin{align*}
		\left( \frac{\sqrt{3}}{\sqrt{2}\pi} - \frac{\sqrt{3}}{\sqrt{2\pi}} \right)\frac{1}{\sqrt{N}} + \frac{j}{N} - \frac{\pi j^2}{4 \sqrt{6} N^{\frac{3}{2}}} +  \frac{3926}{N} < 0.
	\end{align*}
	It is simple to check that this is always satisfied for $n \geq 14$, and the theorem follows. We note that this also proves Theorem \ref{Thm: main Delta_j>0}.
\end{proof}

We are grateful to the referee for pointing out the following more elementary approaches to proving Theorem \ref{Thm: main Delta_j>0}, and in fact a wider class of inequalities for $p(n)$. Let $V_j(n)$ be the set of partitions of non-$j$-ary partitions. Then for any $\ell \geq 0$ we may construct a map 
\begin{align*}
	\pi \colon V_j(n-\ell) &\to V_j(n) \\
	(\lambda_1,\dots,\lambda_s) &\mapsto (\lambda_1+\ell, \lambda_2,\dots,\lambda_s).
\end{align*}
Since this map is clearly injective, we immediately obtain that
\begin{align*}
	p(n-\ell) - p(n-\ell-j) \leq p(n) - p(n-j)
\end{align*}
for all $\ell \geq 0$. Choosing $\ell = j$ we recover Theorem \ref{Thm: main Delta_j>0}. One may also write this in terms of coefficients of $q$-series as follows. We have that
\begin{align*}
	\Delta^r_j(p(n)) = [q^n] (1-q^j)^r \prod_{k \geq 1} (1-q^k)^{-1}
\end{align*}
as in \cite[equation (2.4)]{Od}. When $r=2$, it is readily checked (using the $q$-binomial theorem) that the $q$-series has non-negative coefficients, giving Theorem \ref{Thm: main Delta_j>0}. However, if one fixes $j$ and asks about the behaviour as $r \to \infty$ it is less clear whether the $q$-series has non-negative coefficients. For $j=1$ this is Gupta's conjecture \cite{Gupta}. Moreover, in \cite{Od} it is shown that $\Delta_1^r(p(n))$ alternates in sign before eventually becoming non-negative. Does a similar phenomenon hold for $\Delta_j^r(p(n))$?

We remark that these more elementary approaches rely on the combinatorial structure and the infinite product representations that occur for $p(n)$. For other objects with similar asymptotic behaviour to $p(n)$, our analytical techniques provide a pathway to similar inequalities where one may not have a combinatorial interpretation or infinite product representation.


\begin{thebibliography}{99}
	

\bibitem{akande} A. Akande, T. Genao, S. Haag, M. Hendon, N. Pulagam, R. Schneider, and A. Sills, {\it Computational study of non-unitary partitions},  \url{https://arxiv.org/abs/2112.03264}, preprint.

	\bibitem{BKRT} K. Bringmann,  B. Kane, L. Rolen, and Z. Tripp, {\it Fractional partitions and conjectures of Chern-Fu-Tang and Heim-Neuhauser}, Trans. Am. Math. Soc., Series B, accepted for publication.
	
	\bibitem{Canfield} R. Canfield, S. Corteel, and P. Hitczenko, \textit{Random partitions with non-negative {$r$}-th differences}, Adv. in Appl. Math., Special issue in honor of Dominique Foata's 65th birthday (Philadelphia, PA, 2000), {\bf 27} (2001), no. 2-3, 298--317.
	
\bibitem{Chen} W. Y. C. Chen,L. X. W. Wang, and G. Y. B. Xie, \textit{Finite differences of the logarithm of the partition function}, Math. Comp., {\bf 85} (2016), no. 298, 825--847.
		
	\bibitem{Dyson} F. Dyson, {\it A new symmetry of partitions}, J. Combinatorial Theory {\bf 7} (1969), 56--61.
	
	\bibitem{Garvan} Garvan, F., {\it Generalizations of {D}yson's rank and non-{R}ogers-{R}amanujan
		partitions}, Manuscripta Math., {\bf 84} (1994), n0. 3-4, 343--359.
	
	\bibitem{GORZ} M. Griffin, K. Ono, Ken, L. Rolen, and D. Zagier, {\it Jensen polynomials for the {R}iemann zeta function and other
			sequences}, Proc. Natl. Acad. Sci. USA, {\bf 116} (2019), no. 23, 11103--11110.
		
	
\bibitem{Gupta} H. Gupta, {\it Finite differences of the partition function}, Math. Comp. {\bf 32} (1978), 144 1241--1243.
	
	\bibitem{HardyRamanujan} G. Hardy and S. Ramanujan, \emph{Asymptotic formulae in combinatory analysis},
	Proc. London Math. Soc. Ser. 2 \textbf{17} (1918), 75--115.
	
	\bibitem{IJT} J. Iskander, V. Jain, and V. Talvola, {\it Exact formulae for the fractional partition functions}, Res. Number Theory {\bf 6} (2020), 201--215.
	
	\bibitem{Knessl2} C. Knessl, \textit{Asymptotic behavior of high-order differences of the plane partition function}, Discrete Math. 126 (1994), no. 1-3, 179?193
	
	\bibitem{Knessl1} C. Knessl and J. B. Keller, \textit{Asymptotic behavior of high-order differences of the partition
			function}, Comm. Pure Appl. Math., {\bf 44} (1991), 8-9, 1033--1045.
	 
	
\bibitem{LW} H. Larson and I. Wagner, {\it Hyperbolicity of the partition {J}ensen polynomials}, Res. Number Theory, {\bf 5} (2019), no. 2, Paper No. 19, 12.
	
	
	\bibitem{L1} D. Lehmer, {\it On the remainders and convergence of the series for the
			partition function}, Trans. Amer. Math. Soc., {\bf 46} (1939), 362--373.
		
		

\bibitem{L2} D. Lehmer, {\it On the series for the partition function}, Trans. Amer. Math. Soc., {\bf 43} (1938), no. 2, 271--295.
	

	
	\bibitem{LZ} Z. Liu and N. H. Zhou, \textit{Uniform asymptotic formulas for the Fourier coefficients of the inverse of theta functions}, \url{https://arxiv.org/abs/1903.00835}, preprint.
	
\bibitem{Merca} M. Merca and J. Katriel, \textit{A general method for proving the non-trivial linear
			homogeneous partition inequalities}, Ramanujan J., {\bf 51} (2020), no. 2, 245--266.
	
	
	\bibitem{Od} A. M. Odlyzko, \textit{Differences of the partition function}, Acta Arith., {\bf 49} (1988), no. 3, 237--254.
	
	
	\bibitem{RademacherExact} H. Rademacher, \emph{A convergent series for the partition function $p(n)$}, PNAS February 1, 1937 {\bf 23} (2), 78--84.
	
	\bibitem{Sch} R. Schneider, \textit{Nuclear partitions and a formula for $p(n)$},  \url{https://arxiv.org/abs/1912.00575}, preprint.
		
	
	
	
	\bibitem{Watson} G. Watson, {\it A treatise on the theory of Bessel functions}, Cambridge University Press (1995).
\end{thebibliography}
\end{document}